\DeclareMathOperator*{\argmin}{arg\,min}
\DeclareMathOperator{\dist}{dist}
\newcommand{\RR}{\mathbb{R}}
\newcommand{\journalname}[1]{\def\@journalname{#1}}
\newcommand{\@journalname}{}
\newcommand{\titlerunning}[1]{\def\@titlerunning{#1}}
\newcommand{\authorrunning}[1]{\def\@authorrunning{#1}}
\newcommand{\@titlerunning}{}
\newcommand{\@authorrunning}{}
\newcommand{\keywordname}{\textbf{Keywords}}
\newcommand{\keywords}[1]{%
  \par\addvspace\medskipamount
  {\noindent\keywordname\enspace #1\par}
}
\newcommand{\subclassname}{\textbf{Mathematics Subject Classification (2020)}\enspace}
\newcommand{\subclass}[1]{%
  \par\addvspace\medskipamount
  {\noindent\subclassname #1\par}
}
\newcommand{\institute}[1]{\def\@institute{#1}}
\newcommand{\@institute}{}
\renewcommand{\maketitle}{%
  \begin{center}
    {\small\textbf{\@journalname}\par}
    \vspace{1.5em}
    {\LARGE\bfseries \@title \par}
    \vspace{1em}
    {\large \@author \par}
    \vspace{0.5em}
    {\small \@institute \par}
    \vspace{1.5em}
  \end{center}%
  \thispagestyle{plain}
}
\numberwithin{equation}{section}
\theoremstyle{plain}
\newtheorem{theorem}{Theorem}[section]
\newtheorem{lemma}[theorem]{Lemma}
\newtheorem{proposition}[theorem]{Proposition}
\theoremstyle{definition}
\newtheorem{definition}[theorem]{Definition}
\newtheorem{assumption}[theorem]{Assumption}
\theoremstyle{remark}
\newtheorem{remark}[theorem]{Remark}
\title{Deep Centralization for the Circumcentered Reflection Method}
\titlerunning{Deep Centralization for the CRM}  
\author{Pablo Barros}
\authorrunning{P.~Barros} 
\institute{%
School of Applied Mathematics, Fundação Getulio Vargas (FGV) \\ Rio de Janeiro, Brazil \\
\texttt{pabloacbarros@gmail.com}
}
\begin{document}

\maketitle

\begin{abstract}
We introduce the extended centralized circumcentered reflection method (ecCRM), a framework for two-set convex feasibility that encompasses the classical cCRM \cite{BehlingBelloCruzIusemSantos2024} as a special case. Our method replaces the fixed centralization step of cCRM with an admissible operator $T$ and a parameter $\alpha$, allowing control over computational cost and step quality. We show that ecCRM retains global convergence, linear rates under mild regularity, and superlinearity for smooth manifolds. Numerical experiments on large-scale matrix completion indicate that deeper operators can dramatically reduce overall runtime, and tests on high-dimensional ellipsoids show that vanishing step sizes can yield significant acceleration, validating the practical utility of both algorithmic components of ecCRM.
\keywords{Convex feasibility $\cdot$ Projection methods $\cdot$ Circumcenter methods}
\subclass{65K05 $\cdot$ 65B99 $\cdot$ 90C25}
\end{abstract}

\medskip

\section{Introduction}

We study the two-set convex feasibility problem (CFP)
\begin{equation}\label{eq:CFP}
    \text{find } z \in X \cap Y,
\end{equation}
where \(X,Y\subset \mathbb{R}^n\) are closed convex sets with nonempty intersection.
Projection algorithms for such problems form a classical modeling paradigm in applied mathematics, going back at least to von Neumann's alternating projections and Cimmino’s simultaneous projections \cite{vonneumann1950functional,Cimmino1938}. 
Recently, geometrically inspired accelerations such as the circumcentered-reflection method (CRM) and its centralized variant cCRM have been shown to significantly speed up convergence, achieving superlinear rates under regularity conditions \cite{crm1,BauschkeOuyangWang2022,BehlingBelloCruzIusemSantos2024}. 

In this work, we present ecCRM, a modular advancement of cCRM. We define the iteration (Section \ref{sec:eccrm}) via an admissible operator $T$ and a relaxation parameter $\alpha$, both of which may vary at each step. In our theoretical analysis (Sections \ref{sec:conv-ecCRM} and \ref{sec:conv-order-ecCRM}), this perspective yields proofs that are novel, shorter, and simpler than the arguments in \cite{BehlingBelloCruzIusemSantos2024}, while establishing stronger results like Q-linear convergence and improving practical performance (Section \ref{sec:experiments}). Given that cCRM is already established as a highly competitive method, often surpassing classical algorithms by several orders of magnitude, our structural improvements yield further non-trivial acceleration on this tight baseline.

\subsection{Preliminaries}

We work in the standard Euclidean space \(\mathbb{R}^n\), equipped with the inner product \(\langle \cdot, \cdot \rangle\) and associated norm \(\|\cdot\|\). 
For a nonempty closed convex set \(C \subset \mathbb{R}^n\), 
\[P_C(x) := \argmin_{y\in C}\|x-y\| \quad \text{and} \quad \dist(x,C) := \inf_{y\in C}\|x-y\| \] 
denote the metric projection and distance functions, respectively. Recall the characteristic inequality $\langle z - P_C(z),\; x - P_C(z) \rangle \le 0 \ \forall \ x\in C$, equivalent to
\begin{equation}\label{eq:pythag} \|z - x\|^2 \;\ge\; \|z - P_C(z)\|^2 + \|P_C(z) - x\|^2, \qquad \forall \ z\in\RR^n, \, x\in C. \end{equation} 
Throughout, we fix sets $X, Y$ that satisfy the following assumption.
\begin{assumption}\label{ass:main}
    $X, Y \subset \mathbb{R}^n$ are closed, convex sets and \(S := X \cap Y \neq \varnothing\).
\end{assumption} 
For simplicity, we introduce the following notation:
\begin{equation}\label{def:residuals}
\delta(z) := \max \{\mathrm{dist}(z,X),\,\mathrm{dist}(z,Y)\},
\qquad
\mathfrak d(z) := \mathrm{dist}(z,S).
\end{equation}

\begin{definition}[Centralized points]
A point $z\in \RR^n$ is said to be \emph{centralized relative to $(X,Y)$} if
\[
\langle z - P_X z,\; z - P_Y z \rangle \le 0.
\]
It is said to be \emph{strictly centralized relative to $(X,Y)$} if the inequality is strict.
\end{definition}
\noindent Since the sets $X$ and $Y$ are fixed throughout, we henceforth omit the qualifier ``relative to $(X,Y)$''.

The following construction forms the core of the method and introduces a novel centralization procedure. It reveals that the specific midpoint property restricted to alternating projections in \cite[Lemma 2.2]{BehlingBelloCruzIusemSantos2024} is a special case of a broader phenomenon: centralization is robust along the entire segment connecting any $y \in Y$ to $P_X y$.

\begin{lemma}[Centralization procedure]
\label{lem:centralized-interp}
Take any $y\in Y$, any $\alpha\in (0,1)$, and define \(z := \alpha y + (1-\alpha) P_X y.\) Then:
\begin{enumerate}[label=(\roman*)]
    \item $z$ is centralized relative to $(X,Y)$.
    \item If $z$ is not strictly centralized, then $z\in Y$.
    \item If $y \in P_Y(X) \setminus S$, then $z$ is strictly centralized.
\end{enumerate}
\end{lemma}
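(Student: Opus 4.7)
The plan is to first extract the identity $P_X z = P_X y$, which follows because $z$ lies on the segment $[P_X y, y]$ and the projection onto $X$ is invariant along such segments (a one-line computation from the characteristic inequality \eqref{eq:pythag} applied at $P_X y$). Writing $u := y - P_X y$, this yields the clean identities $z - P_X z = \alpha u$ and $z - y = -(1-\alpha) u$, which drive the rest of the argument.

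For (i), I would expand
\[
\langle z - P_X z,\, z - P_Y z\rangle \;=\; \alpha \langle u,\, z - y\rangle + \alpha \langle u,\, y - P_Y z\rangle.
\]
The first term equals $-\alpha(1-\alpha)\|u\|^2$. For the second, nonexpansiveness of $P_Y$ together with $P_Y y = y$ gives $\|y - P_Y z\| \le \|z - y\| = (1-\alpha)\|u\|$, and Cauchy--Schwarz then bounds it by $\alpha(1-\alpha)\|u\|^2$. The two contributions cancel, yielding the desired inequality.

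For (ii), equality forces both Cauchy--Schwarz and the nonexpansiveness estimate to be tight simultaneously. If $u = 0$ then $y \in X \cap Y$ and $z = y \in Y$. Otherwise, these equality conditions imply $y - P_Y z$ is a nonnegative scalar multiple of $u$ with norm exactly $(1-\alpha)\|u\|$, so $y - P_Y z = (1-\alpha) u = y - z$, i.e.\ $P_Y z = z$ and hence $z \in Y$.

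For (iii), I would argue by contradiction: if $z$ is not strictly centralized, (ii) gives $z \in Y$. Pick $x_0 \in X$ with $P_Y x_0 = y$. The characteristic inequality at $y = P_Y x_0$ applied to the test point $z \in Y$ yields $\langle x_0 - y,\, u\rangle \ge 0$, while the characteristic inequality at $P_X y$ applied to $x_0 \in X$, after the decomposition $x_0 - P_X y = (x_0 - y) + u$, yields $\langle x_0 - y,\, u\rangle \le -\|u\|^2$. Together they force $u = 0$, so $y \in X \cap Y$, contradicting $y \notin S$. The delicate step is the equality analysis in (ii): the two tightness conditions (Cauchy--Schwarz and nonexpansiveness) must be combined to localize $P_Y z$ exactly at $z$, and this is what then powers the strict-centralization conclusion in (iii).
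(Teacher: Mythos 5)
Your proof is correct, but parts (i)--(ii) take a genuinely different route from the paper. Where you bound $\langle u,\, y - P_Y z\rangle$ via nonexpansiveness of $P_Y$ (using $P_Y y = y$) followed by Cauchy--Schwarz, the paper instead invokes the variational characterization of $P_Y z$ directly: since $y \in Y$, $\langle y - P_Y z,\, z - P_Y z\rangle \le 0$, so after writing $z - P_X z = \tfrac{\alpha}{1-\alpha}(y-z)$ the whole inner product collapses to $\tfrac{\alpha}{1-\alpha}\bigl(\langle y - P_Y z, z - P_Y z\rangle - \|z - P_Y z\|^2\bigr) \le 0$. The payoff of the paper's version is that the equality case of (ii) is immediate --- equality forces $\|z - P_Y z\|^2 = 0$, hence $z \in Y$ in one line --- whereas your version needs the two-stage tightness analysis (Cauchy--Schwarz equality plus the norm bound being attained) to localize $P_Y z$ at $z$; that analysis is carried out correctly, including the degenerate case $u=0$, but it is the more delicate path. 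For (iii) the two arguments use the same two characteristic inequalities (at $P_X y$ tested against $x_0$, and at $y = P_Y x_0$ tested against a point of $Y$); the paper runs them forward to conclude $\langle z - y,\, x_0 - y\rangle > 0$ and hence $z \notin Y$ directly, while you run them as a contradiction forcing $u = 0$ and $y \in S$ --- logically equivalent, with the paper's direct form making slightly more visible where strict positivity of $\|u\|^2$ enters.
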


\begin{proof}
Since $P_Xz = P_Xy$ (\cite[Prop. 3.21]{Bauschke2017}) and $\langle y - P_Y z,\; z - P_Y z \rangle \le 0$,
\begin{align*}
\langle z - P_X z,\; z - P_Y z \rangle
&= \big\langle \alpha y + (1-\alpha)P_X y - P_X y,\; z - P_Y z \big\rangle \\
&= \frac{\alpha}{1-\alpha} \langle y - z,\; z - P_Y z \rangle \\
&= \frac{\alpha}{1-\alpha} \big( \langle y - P_Y z,\; z - P_Y z \rangle - \|z - P_Y z\|^2 \big) \le 0,
\end{align*}
with equality only if $z = P_Y z \in Y$, which yields (i) and (ii). If $y \in P_Y(X) \setminus S$, then $y = P_Y x$ with $x \in X$ and $y \neq P_X y \in X$. Thus
\[
\langle P_X y - y,\, x - y \rangle
=
\langle P_X y - y,\, x - P_X y \rangle + \|P_X y - y\|^2
> 0,
\]
because $x-y = (x-P_X y) + (P_X y-y)$ and $\langle y - P_X y,\, x - P_X y \rangle \le 0.$ Then
\[
\langle z - y,\, x - y \rangle
=
\big\langle \alpha y + (1-\alpha)P_X y - y,\, x - y \big\rangle
=
(1-\alpha)\,\langle P_X y - y,\, x - y \rangle
> 0.
\]
Hence $z \notin Y$ and by item (ii), $z$ must be strictly centralized, proving (iii).
\end{proof}

Recall that a sequence $(z_k)_{k \ge 0}$ is \emph{Fejér monotone with respect to $S$} if
\begin{equation}\label{eq:Fejer}
\|z_{k+1}-s\| \le \|z_k-s\|
\qquad
\forall\, s\in S,\ \forall\,k \ge 0.
\end{equation}
We collect below some well-known implications of this property.

\begin{proposition}\label{prop:fejer}
If $(z_k)\subset\mathbb R^n$ is Fejér monotone with respect to $S$, then:
\begin{enumerate}[label=(\roman*)]
\item If $\delta(z_k) \to 0$, then $(z_k)$ converges to a point in $S$.
\item If $(z_k)$ converges to a point $\bar{z}$, then, for every $k \ge 0$, \(\mathrm{dist}(z_k,S) \ge \frac{1}{2}\,\|z_k - \bar{z}\|.\)
\item If the scalar sequence $\bigl(\mathrm{dist}(z_k,S)\bigr)_{k \ge 0}$ converges Q-linearly (resp. superlinearly) to $0$, then $(z_k)_{k \ge 0}$ converges R-linearly (resp. superlinearly) to a point $\bar z \in S$, with the same asymptotic constant.
\end{enumerate}
\end{proposition}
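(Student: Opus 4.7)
The plan is to handle the three items in order, since each leverages the previous one.

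\textbf{Item (i).} I would first exploit Fejér monotonicity to get boundedness: fixing any $s_0 \in S$, the inequality $\|z_k - s_0\| \le \|z_0 - s_0\|$ confines the sequence to a ball and hence produces a cluster point $\bar z$. The hypothesis $\delta(z_k)\to 0$ together with continuity of $\mathrm{dist}(\cdot,X)$ and $\mathrm{dist}(\cdot,Y)$ forces $\bar z \in X \cap Y = S$. To upgrade subsequential convergence to full convergence, I would replay Fejér monotonicity with $s = \bar z$: then $(\|z_k-\bar z\|)_k$ is monotone nonincreasing and admits a subsequence tending to $0$, so the whole sequence converges to $\bar z$.

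\textbf{Item (ii).} Fix $k$. For every $j \ge k$, Fejér monotonicity with the specific choice $s = P_S z_k \in S$ gives
\[
\|z_j - P_S z_k\| \;\le\; \|z_k - P_S z_k\| \;=\; \mathrm{dist}(z_k,S).
\]
Letting $j\to\infty$ and using $z_j\to\bar z$ yields $\|\bar z - P_S z_k\| \le \mathrm{dist}(z_k,S)$. A single application of the triangle inequality then gives
\[
\|z_k-\bar z\| \;\le\; \|z_k - P_S z_k\| + \|P_S z_k - \bar z\| \;\le\; 2\,\mathrm{dist}(z_k,S),
\]
which is the claim.

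\textbf{Item (iii).} Since $S \subset X$ and $S \subset Y$, we have $\delta(z_k) \le \mathrm{dist}(z_k,S)$, so Q-linear (or superlinear) convergence of $\mathrm{dist}(z_k,S)$ to $0$ implies $\delta(z_k)\to 0$. Item (i) then provides a limit $\bar z \in S$, and item (ii) supplies the sandwich
\[
\mathrm{dist}(z_k,S) \;\le\; \|z_k-\bar z\| \;\le\; 2\,\mathrm{dist}(z_k,S),
\]
where the lower bound uses $\bar z \in S$. In the Q-linear case with eventual rate $\rho$, this yields $\|z_k-\bar z\| \le 2\,\mathrm{dist}(z_k,S) \le 2 C \rho^k$ for suitable $C$, i.e.\ R-linear convergence with $\limsup_k \|z_k-\bar z\|^{1/k} \le \rho$ (the factor $2$ disappears in the $k$-th root). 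In the Q-superlinear case,
\[
\frac{\|z_{k+1}-\bar z\|}{\|z_k-\bar z\|}
\;\le\;
\frac{2\,\mathrm{dist}(z_{k+1},S)}{\mathrm{dist}(z_k,S)}
\;\longrightarrow\; 0,
\]
so $(z_k)$ converges R-superlinearly.

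The only delicate point is item (ii), where one must be careful to apply Fejér monotonicity to the \emph{tail} with the projection $P_S z_k$ frozen at time $k$ rather than moving with the index; the remaining arguments are standard manipulations once the sandwich of item (ii) is in hand.
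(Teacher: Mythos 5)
Your proposal is correct and, for item (i), is essentially identical to the paper's argument (boundedness from Fejér monotonicity, a cluster point forced into $S$ by continuity of the distance functions, then monotonicity of $\|z_k-\bar z\|$ to upgrade to full convergence). For items (ii) and (iii) the paper simply cites external references, and your arguments — freezing $P_S z_k$ and passing to the limit along the tail for (ii), then the sandwich $\mathrm{dist}(z_k,S)\le\|z_k-\bar z\|\le 2\,\mathrm{dist}(z_k,S)$ for (iii) — are the standard proofs those references contain, so there is no substantive divergence.
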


\begin{proof}
For (i), fix $s\in S$. By \eqref{eq:Fejer}, $\|z_k-s\| \le \|z_0-s\|$, so $(z_k)$ is bounded. Hence a subsequence $(z_{k_j})$ converges to some
$\bar z$. Since $\delta$ is continuous, $\delta(z_{k_j})\to 0$ implies \(\delta(\bar z) = 0,\) so $\bar z\in X\cap Y = S$. Applying \eqref{eq:Fejer} with $s=\bar z$ shows that $(\|z_k-\bar z\|)_k$ is
nonincreasing. On the other hand, \(\lim_{j\to\infty} \|z_{k_j}-\bar z\| = 0,\) so necessarily $\|z_k-\bar z\|\to 0$. Thus $z_k\to \bar z\in S$. For (ii), see, e.g., \cite[Proposition~2]{Barros3pm}. For (iii), see, e.g., \cite[Theorem~5.12]{Bauschke2017} (resp. \cite[Proposition~3.12]{BehlingBelloCruzIusemSantos2024}.)
\end{proof}

\section{The ecCRM Algorithm}\label{sec:eccrm}

\begin{definition}[Circumcenter]\label{def:circumcenter}
Let $z,v,w \in \mathbb{R}^n$ be given.
A point $c \in \mathbb{R}^n$ is called the \emph{circumcenter} of $\{z,v,w\}$ if
\begin{enumerate}[label=(\roman*)]
    \item $\|c - z\| = \|c - v\| = \|c - w\|$, and
    \item $c \in \operatorname{aff}\{z,v,w\}
    := \{u \in \mathbb{R}^n \mid u = z + \alpha (v-z) + \beta (w-z),\ \alpha,\beta \in \mathbb{R}\}$.
\end{enumerate}
When it exists, it is unique and denoted by \(\operatorname{circ}(z,v,w).\)
\end{definition}

\begin{definition}[PCRM operator]\label{def:PCRM}
Let $X,Y \subset \mathbb{R}^n$ be closed convex sets with reflection operators \(R_X := 2P_X - \operatorname{Id}, \ R_Y := 2P_Y - \operatorname{Id}\).
The \emph{parallel circumcentered-reflection} operator
$\mathrm{PCRM} : \mathbb{R}^n \to \mathbb{R}^n$ is defined by
\[
    \mathrm{PCRM}(z) 
    := \operatorname{circ}\bigl(z, R_X(z), R_Y(z)\bigr),
    \qquad z \in \mathbb{R}^n.
\]
\end{definition}
\noindent
Notice that if $z \in Y$ (resp. $z \in X$) then $\mathrm{PCRM}(z)$ is just $P_Xz$ (resp. $P_Yz$). We write only $\mathrm{PCRM}$ for $\mathrm{PCRM}_{X,Y}$ since $X,Y$ are fixed.

\begin{definition}[ecCRM operator]
\label{def:ecCRMoperator}
An operator $T:\RR^n\to\RR^n$ is called \emph{admissible} if it maps to $Y$ and is quasi-nonexpansive relative to $S$, i.e.,
\begin{equation}\label{def:admissible}
    \mathrm{Im}\,T \subset Y
\quad\text{and}\quad
\|Tz - s\| \le \|z - s\| \quad \forall\, z\in \RR^n,\ \forall\, s\in S.
\end{equation}
Now fix $\alpha\in(0,1)$.  The $\alpha$-\emph{centralizer} operator associated with $T$ is
\[
N^\alpha := \alpha\, T + (1-\alpha)\, P_X T.
\]
Finally, the corresponding \emph{ecCRM operator} is
\[
\mathscr{C}^\alpha := \mathrm{PCRM}_{X,Y} \circ N^\alpha.
\]
\end{definition}

\begin{definition}[ecCRM method]
\label{def:ecCRM}
Fix an admissible operator $T$ and a sequence \((\alpha_k) \subset (0,1)\). Starting at $z \in \RR^n,$ ecCRM iterates by
\[
z_{k+1} = \mathscr{C}^{\alpha_k} z_k, \qquad z_0 = z.
\]
\end{definition}

\noindent
Although Definition \ref{def:ecCRM} fixes $T$ for notational simplicity, the convergence analysis holds if we choose a different admissible operator $T_k$ at every iteration. 

\begin{remark}[Computational note and examples]
Since \(N^\alpha z\) lies on the segment connecting \(Tz\) and \(P_X(Tz)\), we have \(P_X(N^\alpha z) = P_X(Tz)\). Thus, once \(Tz\) is evaluated, the method requires only \emph{two} additional projections to form the reflections needed by PCRM: \(P_X(Tz)\) and \(P_Y(N^\alpha z)\).
In general, any operator of the form \(T = P_Y \circ J\) is admissible, provided \(J\) is quasi-nonexpansive with respect to \(S\).
Specific instances include the basic \(T = P_Y\) (where \(J=\operatorname{Id}\), total 3 projections); the standard cCRM kernel \(T = P_Y P_X\) (where \(J=P_X\), 4 projections); and the ``deep'' kernel \(T = P_Y P_X P_Y\) (where \(J=P_X P_Y\), 5 projections).
\end{remark}

\section{Convergence of ecCRM}\label{sec:conv-ecCRM}

We now recall a key structural result for centralized circumcenters from \cite{BehlingBelloCruzIusemSantos2024}.

\begin{lemma}[{\cite[Lemma~2.3]{BehlingBelloCruzIusemSantos2024}}]
\label{lem:2.3ccrm}
Let $z\in\RR^n$ be centralized. Consider
\begin{align*}
    S_X^z &:= \big\{w \in \RR^n \,\big|\, \langle w - P_X(z),\, z - P_X(z) \rangle \le 0\big\},
    \\S_Y^z &:= \big\{w \in \RR^n \,\big|\, \langle w - P_Y(z),\, z - P_Y(z) \rangle \le 0\big\}.
\end{align*}
Then
\[
\mathrm{PCRM}_{X,Y}(z) = P_{S_X^z \cap S_Y^z}(z).
\]
\end{lemma}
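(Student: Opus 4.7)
My plan is to recognize both sides as the same uniquely determined point in the affine hull of $\{z,R_Xz,R_Yz\}$, using centralization to bridge the gap between projecting onto two hyperplanes and projecting onto the intersection of two half-spaces. Since $\tfrac12(z+R_Xz)=P_Xz$, the hyperplane $\partial S_X^z$ is exactly the perpendicular bisector of $z$ and $R_Xz$, and analogously for $\partial S_Y^z$. Consequently, the equidistance defining $c:=\mathrm{PCRM}(z)$ already places $c\in\partial S_X^z\cap\partial S_Y^z\subset S_X^z\cap S_Y^z$, and it only remains to verify the projection characterization $\langle z-c,\,w-c\rangle\le 0$ for every $w\in S_X^z\cap S_Y^z$.

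To handle that inner product, I would use the affine-hull condition to write $z-c=\lambda(z-P_Xz)+\mu(z-P_Yz)$ for suitable $\lambda,\mu\in\RR$, and plug this into $\langle c-P_Xz,z-P_Xz\rangle=0$ and $\langle c-P_Yz,z-P_Yz\rangle=0$ to obtain the $2\times 2$ Gram system
\[
\begin{pmatrix}a & \gamma\\ \gamma & b\end{pmatrix}\begin{pmatrix}\lambda\\\mu\end{pmatrix}=\begin{pmatrix}a\\b\end{pmatrix},\qquad a:=\|z-P_Xz\|^2,\ b:=\|z-P_Yz\|^2,\ \gamma:=\langle z-P_Xz,z-P_Yz\rangle.
\]
Cramer's rule gives $\lambda=b(a-\gamma)/(ab-\gamma^2)$ and $\mu=a(b-\gamma)/(ab-\gamma^2)$, and the centralization hypothesis $\gamma\le 0$ is precisely what forces $\lambda,\mu\ge 0$. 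Using once more that $c\in\partial S_X^z\cap\partial S_Y^z$, I would then split $\langle z-c,w-c\rangle=\lambda\langle z-P_Xz,w-P_Xz\rangle+\mu\langle z-P_Yz,w-P_Yz\rangle$, and both bracketed inner products are $\le 0$ by definition of $S_X^z$ and $S_Y^z$, closing the argument.

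The main obstacle is the degenerate regime in which the Gram matrix becomes singular, namely when $z\in X$, $z\in Y$, or the two projection directions $z-P_Xz$ and $z-P_Yz$ are collinear; then $\mathrm{aff}\{z,R_Xz,R_Yz\}$ drops in dimension and the Cramer's rule derivation breaks down. The cases $z\in X$ and $z\in Y$ follow immediately from the observation recorded after Definition \ref{def:PCRM} together with the fact that one of $S_X^z,\,S_Y^z$ collapses to all of $\RR^n$; the remaining collinear case requires a short separate check that again uses $\gamma\le 0$ to fix orientations along the single relevant direction.
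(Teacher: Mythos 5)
Your argument is sound and, importantly, it supplies something this paper does not: Lemma~\ref{lem:2.3ccrm} is imported verbatim from \cite[Lemma~2.3]{BehlingBelloCruzIusemSantos2024} and no proof is reproduced here, so the in-paper ``proof'' is just a citation. Your route --- identify $\partial S_X^z$ and $\partial S_Y^z$ as the perpendicular bisectors of $\{z,R_Xz\}$ and $\{z,R_Yz\}$, so that the equidistance conditions in Definition~\ref{def:circumcenter} place $c:=\mathrm{PCRM}(z)$ on both bounding hyperplanes, then verify the variational characterization $\langle z-c,\,w-c\rangle\le 0$ by writing $z-c=\lambda(z-P_Xz)+\mu(z-P_Yz)$ and reading off $\lambda,\mu\ge 0$ from the Gram system --- is correct and self-contained. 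The computations check out: in the nondegenerate case $a,b>0$ and $ab-\gamma^2>0$ (Cauchy--Schwarz, strict for independent directions), so $\lambda=b(a-\gamma)/(ab-\gamma^2)>0$ and $\mu=a(b-\gamma)/(ab-\gamma^2)>0$ precisely because $\gamma\le 0$; and the splitting $\langle z-c,\,w-c\rangle=\lambda\langle z-P_Xz,\,w-P_Xz\rangle+\mu\langle z-P_Yz,\,w-P_Yz\rangle$ is legitimate because $c$ lies on both bounding hyperplanes. This is exactly where centralization enters, and it gives a transparent, elementary proof of the imported lemma.

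One correction on the degenerate cases. The subcases $z\in X$ and $z\in Y$ are fine as you describe. But the remaining collinear case (both $z-P_Xz$ and $z-P_Yz$ nonzero and linearly dependent) cannot be settled by ``fixing orientations with $\gamma\le 0$'': centralization forces those vectors to be anti-parallel there, so $z$, $R_Xz$, $R_Yz$ are three \emph{distinct collinear} points, no circumcenter exists, and the identity to be proved is not even well posed. The correct resolution is that this configuration is vacuous under Assumption~\ref{ass:main}: if $z-P_Yz=-t(z-P_Xz)$ with $t>0$ and $u:=z-P_Xz\neq 0$, then for any $s\in S$ the obtuse-angle inequalities $\langle u,\,s-P_Xz\rangle\le 0$ and $\langle -tu,\,s-P_Yz\rangle\le 0$ give $\langle u,s\rangle\le\langle u,z\rangle-\|u\|^2$ and $\langle u,s\rangle\ge\langle u,z\rangle+t\|u\|^2$ simultaneously, a contradiction with $S\neq\varnothing$. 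With that substitution your proof is complete.
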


\begin{lemma}\label{lem:decreasedelta}
If $z$ is centralized, then for every $s\in S$,
\begin{equation}\label{eq:decreasedelta}
\|s-\mathrm{PCRM}(z)\|^2
\;\le\;
\|s-z\|^2 - \max\{\delta(z),\delta(\mathrm{PCRM}(z))\}^2.
\end{equation}
\end{lemma}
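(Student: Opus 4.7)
The plan is to combine Lemma \ref{lem:2.3ccrm} with the Pythagorean projection inequality \eqref{eq:pythag}. Since $z$ is centralized, Lemma \ref{lem:2.3ccrm} gives $\mathrm{PCRM}(z) = P_{S_X^z \cap S_Y^z}(z)$. Observing that $X \subset S_X^z$ and $Y \subset S_Y^z$ (by the projection characterization of $P_X z$ and $P_Y z$), we get $S \subset S_X^z \cap S_Y^z$, so \eqref{eq:pythag} applied with $C = S_X^z \cap S_Y^z$ and $x = s$ yields, for every $s \in S$,
\[
\|s-\mathrm{PCRM}(z)\|^2 \;\le\; \|s-z\|^2 - \|z-\mathrm{PCRM}(z)\|^2.
\]
The entire problem therefore reduces to showing $\|z-\mathrm{PCRM}(z)\| \ge \max\{\delta(z),\delta(\mathrm{PCRM}(z))\}$.

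For the $\delta(z)$ side, the argument is routine: since $\mathrm{PCRM}(z) \in S_X^z$ and the halfspace $S_X^z$ has $P_X z$ as nearest point to $z$, a direct application of \eqref{eq:pythag} to $S_X^z$ gives $\|z-\mathrm{PCRM}(z)\| \ge \|z-P_X z\| = \mathrm{dist}(z,X)$, and symmetrically for $Y$. For the $\delta(\mathrm{PCRM}(z))$ side, I would exploit a sharper feature of circumcenters: the equidistance $\|\mathrm{PCRM}(z)-z\| = \|\mathrm{PCRM}(z)-R_X z\|$ forces $\mathrm{PCRM}(z)$ onto the perpendicular bisector of $[z,R_X z]$, whose midpoint is $P_X z$ and whose normal is $z - P_X z$ — i.e., precisely the hyperplane $\partial S_X^z$. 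Pythagoras applied to the right triangle with vertices $z, P_X z, \mathrm{PCRM}(z)$ then yields $\|P_X z-\mathrm{PCRM}(z)\|^2 = \|z-\mathrm{PCRM}(z)\|^2 - \mathrm{dist}(z,X)^2 \le \|z-\mathrm{PCRM}(z)\|^2$, hence $\mathrm{dist}(\mathrm{PCRM}(z),X) \le \|z-\mathrm{PCRM}(z)\|$, and likewise for $Y$.

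The main conceptual step — and the one I expect to be the crux — is identifying that $\mathrm{PCRM}(z)$ lies on $\partial S_X^z \cap \partial S_Y^z$ rather than anywhere in the interior of $S_X^z \cap S_Y^z$. This tighter location, inherited directly from the circumcenter equidistance property, is what upgrades the generic projection estimate $\|z-\mathrm{PCRM}(z)\| \ge \delta(z)$ to the symmetric counterpart also bounding $\delta(\mathrm{PCRM}(z))$. Once this observation is in place, the remainder is just two applications of \eqref{eq:pythag} and the inequality $\max\{a,b\}^2 \le \|z-\mathrm{PCRM}(z)\|^2$ for $a = \delta(z)$ and $b = \delta(\mathrm{PCRM}(z))$.
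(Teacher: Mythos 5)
Your proof is correct and follows essentially the same route as the paper: Lemma~\ref{lem:2.3ccrm} plus \eqref{eq:pythag} applied first to $S_X^z\cap S_Y^z$ and then to the individual halfspaces, reducing everything to $\|z-\mathrm{PCRM}(z)\|\ge\max\{\delta(z),\delta(\mathrm{PCRM}(z))\}$. The only (minor) difference is that for the $\delta(\mathrm{PCRM}(z))$ bound you invoke the circumcenter equidistance to place $\mathrm{PCRM}(z)$ exactly on $\partial S_X^z$, whereas the paper gets the same conclusion more directly from the mere membership $\mathrm{PCRM}(z)\in S_X^z$ (resp.\ $S_Y^z$) via a single application of \eqref{eq:pythag} to the halfspace, so the ``crux'' you identify is not actually needed.
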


\begin{proof}
Let $z$ be centralized. By Lemma~\ref{lem:2.3ccrm}, take the sets \(S_X^z,\; S_Y^z\) with
\[
\mathrm{PCRM}(z) = P_H(z),
\qquad
H := S_X^z \cap S_Y^z \supset S.
\]
Fix $s\in S\subset H$. The projection inequality \eqref{eq:pythag} applied to $H$ gives
\begin{equation}\label{eq:proj-pyth-dec1}
\|s-\mathrm{PCRM}(z)\|^2
\le
\|s-z\|^2 - \|z-\mathrm{PCRM}(z)\|^2.
\end{equation}
By construction, \(P_Y(z) = P_{S_Y^z}(z).\)
Since $\mathrm{PCRM}(z)\in S_Y^z$, another application of \eqref{eq:pythag} (now to $S_Y^z$) yields
\begin{align*}
    \|z-\mathrm{PCRM}(z)\|^2 
\;&\ge\;
\|z-P_Y(z)\|^2 + \|\mathrm{PCRM}(z)-P_Y(z)\|^2
\\\;&\ge\;
\max\big\{\|z-P_Y(z)\|^2,\;\|\mathrm{PCRM}(z)-P_Y(z)\|^2\big\}.
\end{align*}
Since $P_Y(z)\in Y$, we have \(\|\mathrm{PCRM}(z)-P_Y(z)\| \ge\mathrm{dist}(\mathrm{PCRM}(z),Y),\)
so
\[
\|z-\mathrm{PCRM}(z)\|
\;\ge\;
\max\big\{\mathrm{dist}(z,Y),\,\mathrm{dist}(\mathrm{PCRM}(z),Y)\big\}.
\]
Repeating the same argument with $S_X^z$ and $P_X(z)$ and combining the bounds:
\begin{equation}\label{step:dec1}
\|z-\mathrm{PCRM}(z)\|
\;\ge\;
\max\{\delta(z),\delta(\mathrm{PCRM}(z))\}.
\end{equation}
Finally, substituting \eqref{step:dec1} into \eqref{eq:proj-pyth-dec1} yields precisely \eqref{eq:decreasedelta}.
\end{proof}

We may now state and prove the global convergence result for ecCRM.

\begin{theorem}[Global convergence of ecCRM]\label{thm:ecCRM-conv}
Let $T$ be an admissible operator (see \eqref{def:admissible}) and let $(\alpha_k)\subset(0,1)$ be arbitrary. Consider the ecCRM iterates \((z_k)_{k \ge 0}\) starting from some $z_0\in\RR^n$. Then $(z_k)$ is Fejér monotone with respect to $S$ and converges to a point in $S$.
\end{theorem}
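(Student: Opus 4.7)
The plan is to assemble the already-established pieces: the centralization procedure of Lemma~\ref{lem:centralized-interp} feeds the descent inequality of Lemma~\ref{lem:decreasedelta}, and admissibility of $T$ makes the whole composition contract distances to every $s\in S$. Concretely, I would abbreviate $y_k := T z_k$ and $w_k := N^{\alpha_k} z_k = \alpha_k y_k + (1-\alpha_k) P_X y_k$, so that $z_{k+1} = \mathrm{PCRM}(w_k)$. Since $\mathrm{Im}\,T\subset Y$, $y_k\in Y$, and Lemma~\ref{lem:centralized-interp}(i) immediately tells us that $w_k$ is centralized; Lemma~\ref{lem:decreasedelta} therefore applies.

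The key quantitative estimate I would then establish is
\[
\|s-w_k\|\le\|s-z_k\|\qquad \forall\,s\in S.
\]
This is the one small verification to do by hand: expand $w_k$ as a convex combination, use the triangle inequality
\[
\|s-w_k\|\le \alpha_k\|y_k-s\|+(1-\alpha_k)\|P_Xy_k - s\|,
\]
bound $\|P_X y_k - s\|\le \|y_k-s\|$ via firm nonexpansiveness of $P_X$ at $s\in X$, and finish with admissibility $\|y_k-s\|\le\|z_k-s\|$. Plugging this into Lemma~\ref{lem:decreasedelta} gives
\[
\|s-z_{k+1}\|^2 \le \|s-w_k\|^2 - \max\{\delta(w_k),\delta(z_{k+1})\}^2 \le \|s-z_k\|^2 - \delta(z_{k+1})^2,
\]
which yields Fej\'er monotonicity of $(z_k)$ with respect to $S$ as a free corollary.

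To upgrade Fej\'er monotonicity to convergence, I would fix any $s\in S$ and sum the telescoping inequality above from $k=0$ to $\infty$, obtaining
\[
\sum_{k\ge 0}\delta(z_{k+1})^2 \le \|s-z_0\|^2<\infty,
\]
so $\delta(z_k)\to 0$. Proposition~\ref{prop:fejer}(i) then closes the argument: Fej\'er monotonicity plus vanishing residual forces $(z_k)$ to converge to some point of $S$.

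There is no real obstacle; everything is already packaged by Lemmas \ref{lem:centralized-interp} and \ref{lem:decreasedelta} together with Proposition~\ref{prop:fejer}. The only moving part is the convex-combination estimate $\|s-w_k\|\le\|s-z_k\|$, which is where the admissibility hypothesis on $T$ is used in an essential way (namely, that $T$ is quasi-nonexpansive with respect to $S$, not merely that $\mathrm{Im}\,T\subset Y$).
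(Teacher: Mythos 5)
Your proposal is correct and follows essentially the same route as the paper: centralization of $N^{\alpha_k}z_k$ via Lemma~\ref{lem:centralized-interp}, the descent inequality of Lemma~\ref{lem:decreasedelta}, the convex-combination estimate $\|s-N^{\alpha_k}z_k\|\le\|s-z_k\|$ using nonexpansiveness of $P_X$ at $s\in X$ and admissibility of $T$, and finally the telescoping sum plus Proposition~\ref{prop:fejer}(i). The only cosmetic difference is that you invoke firm nonexpansiveness where plain nonexpansiveness (or the characteristic inequality \eqref{eq:pythag}) already suffices.
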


\begin{proof}
Fix $s\in S$. By Lemma~\ref{lem:centralized-interp}, $N^{\alpha_k} z_k$ is centralized with respect to $(X,Y)$, so Lemma~\ref{lem:decreasedelta} applies with $z = N^{\alpha_k} z_k$ and $\mathrm{PCRM}(z) = z_{k+1} = \mathscr{C}^{\alpha_k} z_k$:
\begin{equation}\label{eq:Fejer-from-N}
\|s-z_{k+1}\|^2
\le
\|s-N^{\alpha_k} z_k\|^2
-
\max\big\{\delta(N^{\alpha_k} z_k),\,\delta(z_{k+1})\big\}^2.
\end{equation}
Using the definition of $N^{\alpha_k} z_k$ and convexity of the norm,
\[
\|s-N^{\alpha_k} z_k\|
\le
\alpha_k \|s-Tz_k\| + (1-\alpha_k)\|s-P_XTz_k\|.
\]
Since $s\in X$, we have $\|s-P_XTz_k\|\le \|s-Tz_k\| \le \|s-z_k\|$, hence
\begin{equation}\label{eq:sNa-sz-iter}
\|s-N^{\alpha_k} z_k\| \le \|s-z_k\|.
\end{equation}
Combining \eqref{eq:Fejer-from-N} and \eqref{eq:sNa-sz-iter} implies
\begin{equation}\label{eq:iter-ineq}
\|s-z_{k+1}\|^2
\le
\|s-z_k\|^2 - \delta(z_{k+1})^2
\qquad\forall \ k \ge 0.
\end{equation}
In particular, $(z_k)$ is Fejér monotone with respect to $S$. Summing \eqref{eq:iter-ineq} over $k \ge 0$ yields
\[
\sum_{k=1}^\infty \delta(z_k)^2 \le \|s-z_0\|^2,
\]
so $\delta(z_k)\to 0$ as $k\to\infty$. Now, Proposition~\ref{prop:fejer}~(i) shows that $(z_k)$ converges to some point $\bar z\in S$. This completes the proof.
\end{proof}

\section{Convergence order of ecCRM}\label{sec:conv-order-ecCRM}

To analyze the local convergence rate of the sequence generated by ecCRM, we require a classical local \emph{error bound} condition, also known as
linear regularity.

\begin{definition}[Error bound]\label{def:EB}
We say that sets $X, \, Y \subset \mathbb{R}^n$ satisfy a local
error bound condition if for some point $\bar z \in X \cap Y$ there exist
a real number $\omega \in (0,1)$ and a neighborhood $V$ of $\bar z$ such that for all $z \in V$:
\begin{equation}\tag{EB}\label{eq:EB}
    \omega\,\mathrm{dist}(z,X \cap Y)
    \;\le\;
    \max\{\mathrm{dist}(z,X),\mathrm{dist}(z,Y)\}.
\end{equation}
\end{definition}
\noindent

\begin{lemma}[Linear advance by PCRM]\label{lem:pcrm-rate}
    Assume the error bound condition~\eqref{eq:EB} and let $\beta := \sqrt{1-\omega^2}$. Then, for any centralized $z \in V$,
    \[
    \mathfrak d(\mathrm{PCRM}(z)) \;\le\; \beta\, \mathfrak d(z).
    \]
\end{lemma}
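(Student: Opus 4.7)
The plan is to combine Lemma~\ref{lem:decreasedelta} with the error bound in the most direct way: first specialize the free $s\in S$ in Lemma~\ref{lem:decreasedelta} to the best possible choice, namely $s:=P_S(z)$, which turns the left-hand side into a bound on $\mathfrak d(\mathrm{PCRM}(z))$ and the first term on the right-hand side into $\mathfrak d(z)^2$ exactly. After that, the error bound handles the remaining $\delta(z)^2$ term.

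More concretely, I would proceed as follows. Since $z$ is assumed centralized, Lemma~\ref{lem:decreasedelta} applies and yields, for $s=P_S(z)$,
\[
\mathfrak d(\mathrm{PCRM}(z))^2
\;\le\;
\|s-\mathrm{PCRM}(z)\|^2
\;\le\;
\mathfrak d(z)^2 \;-\; \max\{\delta(z),\delta(\mathrm{PCRM}(z))\}^2.
\]
Dropping the $\delta(\mathrm{PCRM}(z))$ term (which might lie outside $V$ and is not needed here) gives
\[
\mathfrak d(\mathrm{PCRM}(z))^2 \;\le\; \mathfrak d(z)^2 - \delta(z)^2.
\]
Now apply~\eqref{eq:EB} at $z\in V$ to get $\delta(z)^2 \ge \omega^2 \mathfrak d(z)^2$, and hence
\[
\mathfrak d(\mathrm{PCRM}(z))^2 \;\le\; (1-\omega^2)\,\mathfrak d(z)^2 \;=\; \beta^2\,\mathfrak d(z)^2,
\]
and taking square roots concludes.

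I do not anticipate a real obstacle here; the work has been done in Lemma~\ref{lem:decreasedelta}. The only subtlety worth flagging is that one must resist the urge to use the stronger bound involving $\delta(\mathrm{PCRM}(z))$: we have no guarantee that $\mathrm{PCRM}(z)\in V$, so the error bound only applies at $z$. Selecting $s=P_S(z)$ rather than an arbitrary $s\in S$ is the key move that aligns Lemma~\ref{lem:decreasedelta} with the distance-to-$S$ quantity appearing in~\eqref{eq:EB}.
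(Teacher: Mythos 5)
Your proposal is correct and follows essentially the same route as the paper: both pick $s=P_S(z)$, invoke Lemma~\ref{lem:decreasedelta}, retain only the $\delta(z)$ term from the maximum, and finish with the error bound~\eqref{eq:EB}. The subtlety you flag (only $z$, not $\mathrm{PCRM}(z)$, is known to lie in $V$) is exactly why the paper also uses only $\delta(z)$ at that step.
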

\begin{proof}
Let $s := P_S(z)$. Note that $\mathfrak d(\mathrm{PCRM}(z)) \le \|\mathrm{PCRM}(z)-s\|$ and $\mathfrak d(z) = \|z-s\|$. Applying Lemma~\ref{lem:decreasedelta} with this $s$ then gives
\begin{equation}\label{eq:contraction-base}
\mathfrak d(\mathrm{PCRM}(z))^2
\;\le\;
\mathfrak d(z)^2 - \delta(z)^2.
\end{equation}
By the error bound condition~\eqref{eq:EB}, \(\delta(z) \ge \omega\,\mathfrak d(z).\)
Plugging into \eqref{eq:contraction-base} yields
\[
\mathfrak d(\mathrm{PCRM}(z))^2
\;\le\;
\mathfrak d(z)^2 - \omega^2 \mathfrak d(z)^2
=
(1-\omega^2)\,\mathfrak d(z)^2
=
\beta^2\,\mathfrak d(z)^2,
\]
and taking square roots gives \(\mathfrak d(\mathrm{PCRM}(z)) \le \beta\,\mathfrak d(z).\)
\end{proof}

\begin{lemma}[Single-step rate]
\label{lem:rates}
Assume the error bound condition~\eqref{eq:EB} holds and set
\(\beta := \sqrt{1-\omega^2}.\) Then, for every \(z\in V\),
\begin{enumerate}[label=(\roman*)]
    \item  we have \[
\mathfrak d(\mathscr{C}^\alpha z)
\;\le\; \beta (\alpha+(1-\alpha)\beta ) \, \mathfrak d(Tz);
\]
\item moreover, if \(N^\alpha z\) is not strictly centralized, then 
\[
\mathfrak d(\mathscr{C}^\alpha z)
\;\le\;
\frac{\alpha \beta}{1-(1-\alpha)\beta} \,\mathfrak d(Tz).
\]
\end{enumerate}
\end{lemma}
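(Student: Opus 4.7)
The plan is to handle both items uniformly by two preliminary reductions and to specialize only at the end. Since $N^\alpha z = \alpha Tz + (1-\alpha) P_X Tz$ is centralized by Lemma~\ref{lem:centralized-interp}(i), I would first apply Lemma~\ref{lem:pcrm-rate} to obtain
\[
\mathfrak d(\mathscr{C}^\alpha z) \;\le\; \beta\,\mathfrak d(N^\alpha z)
\]
(granting that the relevant points stay in $V$, which follows from the quasi-nonexpansiveness of $T$ and of $P_X$ relative to $S$). Then I would use convexity of $\mathfrak d = \mathrm{dist}(\cdot,S)$ (valid since $S$ is convex) to bound
\[
\mathfrak d(N^\alpha z) \;\le\; \alpha\,\mathfrak d(Tz) + (1-\alpha)\,\mathfrak d(P_X Tz),
\]
reducing the whole proof to controlling $\mathfrak d(P_X Tz)$.

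For (i), I would exploit $Tz \in Y$ (by admissibility of $T$) to identify $\delta(Tz) = \mathrm{dist}(Tz,X) = \|Tz - P_X Tz\|$, and then invoke \eqref{eq:EB} at $Tz$ to get $\|Tz - P_X Tz\| \ge \omega\,\mathfrak d(Tz)$. Setting $s := P_S(Tz) \in X$ and applying the Pythagorean inequality~\eqref{eq:pythag} to the projection of $Tz$ onto $X$ yields
\[
\mathfrak d(P_X Tz)^2 \;\le\; \|P_X Tz - s\|^2 \;\le\; \|Tz - s\|^2 - \|Tz - P_X Tz\|^2 \;\le\; (1-\omega^2)\,\mathfrak d(Tz)^2 \;=\; \beta^2\,\mathfrak d(Tz)^2.
\]
Chaining this with the two reductions above gives $\mathfrak d(\mathscr{C}^\alpha z) \le \beta\bigl[\alpha + (1-\alpha)\beta\bigr]\mathfrak d(Tz)$, which is (i).

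For (ii), the extra hypothesis combined with Lemma~\ref{lem:centralized-interp}(ii) forces $N^\alpha z \in Y$. By the note following Definition~\ref{def:PCRM}, $\mathrm{PCRM}$ acts as $P_X$ on $Y$, and $P_X N^\alpha z = P_X Tz$ by the computational note, so one obtains the key identity $\mathscr{C}^\alpha z = P_X Tz$. I would then substitute $\mathfrak d(P_X Tz) = \mathfrak d(\mathscr{C}^\alpha z)$ into the convexity bound to obtain the self-referential inequality
\[
\mathfrak d(\mathscr{C}^\alpha z) \;\le\; \beta\bigl[\alpha\,\mathfrak d(Tz) + (1-\alpha)\,\mathfrak d(\mathscr{C}^\alpha z)\bigr],
\]
and solve for $\mathfrak d(\mathscr{C}^\alpha z)$ to conclude.

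The only step that requires genuine insight is spotting, in (ii), that the failure of strict centralization collapses $\mathscr{C}^\alpha z$ to $P_X Tz$ and thereby turns the convexity bound into a recursion whose solution beats (i). Once that collapse is recognized, the remaining ingredients (convexity of $\mathfrak d$, Pythagoras, and \eqref{eq:EB}) are entirely routine, and the algebra is immediate.
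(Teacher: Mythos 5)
Your proposal is correct and follows essentially the same route as the paper: centralization of $N^\alpha z$, the contraction from Lemma~\ref{lem:pcrm-rate}, convexity of $\mathfrak d$, and for (ii) the collapse $\mathscr{C}^\alpha z = P_X Tz$ turning the bound into a solvable recursion. The only cosmetic difference is that you bound $\mathfrak d(P_X Tz) \le \beta\,\mathfrak d(Tz)$ directly via \eqref{eq:pythag} and \eqref{eq:EB}, whereas the paper gets the same inequality by observing that $Tz \in Y$ is centralized with $\mathrm{PCRM}(Tz) = P_X Tz$ and reusing Lemma~\ref{lem:pcrm-rate}; these are the same argument unpacked versus packaged.
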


\begin{proof}
Recall that \(N^\alpha z\) is centralized and
\(\mathscr{C}^\alpha z = \mathrm{PCRM}(N^\alpha z).\)
By Lemma~\ref{lem:pcrm-rate},
\begin{equation}\label{eq:C-alpha-vs-N}
\mathfrak d(\mathscr{C}^\alpha z)
\;\le\;
\beta\,\mathfrak d(N^\alpha z).
\end{equation}
Next, by convexity of \(\mathfrak d\), we get
\(\mathfrak d(N^\alpha z)\le
\alpha\,\mathfrak d(Tz) + (1-\alpha)\,\mathfrak d(P_XTz).
\)
Combining with \eqref{eq:C-alpha-vs-N} yields
\begin{equation}\label{eq:C-alpha-ineq-raw}
\mathfrak d(\mathscr{C}^\alpha z)
\;\le\;
\beta\big(\alpha\,\mathfrak d(Tz) + (1-\alpha)\,\mathfrak d(P_XTz)\big).
\end{equation}
Notice \(Tz \in Y\) is centralized and \(\mathrm{PCRM}(Tz) = P_XTz,\) hence by Lemma~\ref{lem:pcrm-rate},
\(\mathfrak d(P_XTz) \le \beta \, \mathfrak d(Tz).
\) Inequality \eqref{eq:C-alpha-ineq-raw} then gives (i):
\[
\mathfrak d(\mathscr{C}^\alpha z)
\;\le\;
\beta\big(\alpha\,\mathfrak d(Tz) + (1-\alpha)\beta \, \mathfrak d(Tz)\big) = \beta (\alpha+(1-\alpha)\beta ) \, \mathfrak d(Tz).
\]
Now if \(N^\alpha z\) is not strictly centralized, Lemma~\ref{lem:centralized-interp}~(ii) implies \(N^\alpha z \in Y,\) then \(\mathscr{C}^\alpha z = P_XN^\alpha z = P_XTz\), so \eqref{eq:C-alpha-ineq-raw} is
\[
\mathfrak d(\mathscr{C}^\alpha z)
\;\le\;
\beta\big(\alpha\,\mathfrak d(Tz) + (1-\alpha)\,\mathfrak d(\mathscr{C}^\alpha z)\big).
\]
Rearranging terms, we get the desired estimate (ii).
\end{proof}

We are now ready to show the linear convergence result for our method.

\begin{theorem}[Linear convergence of ecCRM]\label{thm:ecCRM-Rlinconv}
Let $T$ be an admissible operator (see \eqref{def:admissible}) and 
$(\alpha_k)\subset(0,1)$ be arbitrary. Assume the error bound~\eqref{eq:EB} 
holds with constant $\omega\in(0,1)$, and set
\[
\beta := \sqrt{1-\omega^2},\qquad
\bar\alpha := \limsup_{k \to \infty} \alpha_k,\qquad
\rho := \beta\bigl(\bar \alpha + (1-\bar \alpha)\beta\bigr).
\]
For the ecCRM iterates $(z_k)$ from $z_0\in\RR^n$, the distance sequence
$(\mathfrak d(z_k))$ converges Q-linearly to $0$ with asymptotic constant at
most $\rho$, and $(z_k)$ converges R-linearly to a point in $S$ with the same
bound. Moreover, $(z_k)$ converges Q-linearly with constant at most
$(1+\omega^2/4)^{-1/2}$.
\end{theorem}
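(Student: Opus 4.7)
The plan is to derive all three conclusions from the single-step estimate of Lemma~\ref{lem:rates} together with the basic inequality \eqref{eq:iter-ineq} already established in the proof of Theorem~\ref{thm:ecCRM-conv}. First I would invoke Theorem~\ref{thm:ecCRM-conv} to obtain $z_k \to \bar z \in S$. Since $T$ is quasi-nonexpansive with respect to $S$ and $P_X$ is nonexpansive, both $Tz_k$ and $N^{\alpha_k} z_k$ also converge to $\bar z$. Consequently, for all sufficiently large $k$, the points $z_k$, $Tz_k$, and $N^{\alpha_k} z_k$ all lie in the error-bound neighborhood $V$ of $\bar z$, so that Lemmas~\ref{lem:pcrm-rate} and~\ref{lem:rates} are applicable at $z_k$.

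For the Q-linear rate on the distances, I would apply Lemma~\ref{lem:rates}(i) to obtain
\[
\mathfrak d(z_{k+1}) \;\le\; \beta\bigl(\alpha_k + (1-\alpha_k)\beta\bigr)\,\mathfrak d(Tz_k).
\]
Quasi-nonexpansiveness of $T$ gives $\mathfrak d(Tz_k) \le \mathfrak d(z_k)$, so $\mathfrak d(z_{k+1}) \le f(\alpha_k)\,\mathfrak d(z_k)$ where $f(\alpha) := \beta^2 + \alpha\beta(1-\beta)$. Since $\beta \in (0,1)$, $f$ is continuous and strictly increasing on $[0,1]$; hence $\limsup_{k\to\infty} f(\alpha_k) = f(\bar\alpha) = \rho < 1$. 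This is precisely Q-linear convergence of $(\mathfrak d(z_k))$ with asymptotic constant at most $\rho$, and Proposition~\ref{prop:fejer}(iii) upgrades it to R-linear convergence of $(z_k)$ with the same bound.

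For the final Q-linear bound on $(z_k)$, I would specialize \eqref{eq:iter-ineq} to $s = \bar z$ to obtain $\|z_{k+1}-\bar z\|^2 \le \|z_k-\bar z\|^2 - \delta(z_{k+1})^2$. For large $k$, the error bound yields $\delta(z_{k+1}) \ge \omega\,\mathfrak d(z_{k+1})$, and Proposition~\ref{prop:fejer}(ii) gives $\mathfrak d(z_{k+1}) \ge \tfrac12 \|z_{k+1}-\bar z\|$. Combining these,
\[
\bigl(1 + \tfrac{\omega^2}{4}\bigr)\,\|z_{k+1}-\bar z\|^2 \;\le\; \|z_k-\bar z\|^2,
\]
which is exactly Q-linear convergence with constant at most $(1+\omega^2/4)^{-1/2}$.

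The main subtlety lies in the preparatory step: since the error bound only holds inside $V$, we need $Tz_k$ and $N^{\alpha_k}z_k$—not merely $z_k$—to eventually enter $V$, which is what forces us to exploit both the global convergence of Theorem~\ref{thm:ecCRM-conv} and the admissibility of $T$ up front. Once this is in place, the three conclusions reduce to elementary algebra on top of Lemmas~\ref{lem:rates} and~\ref{prop:fejer}, with no deeper obstacle.
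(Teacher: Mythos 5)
Your proposal is correct and follows essentially the same route as the paper: Lemma~\ref{lem:rates}(i) plus admissibility of $T$ for the Q-linear rate of $(\mathfrak d(z_k))$, Proposition~\ref{prop:fejer}(iii) for the R-linear upgrade, and \eqref{eq:iter-ineq} with $s=\bar z$ combined with \eqref{eq:EB} and Proposition~\ref{prop:fejer}(ii) for the final Q-linear bound on the iterates. Your preparatory observation that $Tz_k$ and $N^{\alpha_k}z_k$ also enter $V$ is a small extra precaution the paper leaves implicit, but it does not change the argument.
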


\begin{proof}
By Theorem~\ref{thm:ecCRM-conv}, $(z_k)$ converges to some $\bar z\in S$, so there exists
$k_0$ such that $z_k\in V$ for all $k\ge k_0$, where $V$ is the neighborhood in
\eqref{eq:EB}. For $k\ge k_0$,
\[
\mathfrak d(z_{k+1})
\le
\beta\bigl(\alpha_k + (1-\alpha_k)\beta\bigr)\,\mathfrak d(Tz_k)
\]
by Lemma~\ref{lem:rates}\,(i). Since $T$ is admissible, $\mathfrak d(Tz_k)\le \mathfrak d(z_k)$, hence
\[
\mathfrak d(z_{k+1})
\le
\beta\bigl(\alpha_k + (1-\alpha_k)\beta\bigr)\,\mathfrak d(z_k),
\qquad k\ge k_0.
\]
Taking $\limsup$ in $k$ and using the definition of $\rho$ yields the first claim. By Proposition~\ref{prop:fejer}~(iii), $(z_k)$ converges R-linearly to $\bar z\in S$
with asymptotic constant bounded by $\rho$. For Q-linear convergence of $(z_k)$, apply \eqref{eq:iter-ineq} with $s=\bar z$:
\[
\|z_{k+1}-\bar z\|^2
\le
\|z_k-\bar z\|^2 - \delta(z_{k+1})^2.
\]
Using the error bound and Proposition~\ref{prop:fejer}~(ii) for $z_{k+1} \in V$,
\[
\delta(z_{k+1}) \ge \omega\,\mathfrak d(z_{k+1})
\ge \frac{\omega}{2}\|z_{k+1}-\bar z\|,
\]
so
\[
\|z_{k+1}-\bar z\|^2
\le
\|z_k-\bar z\|^2 - \frac{\omega^2}{4}\|z_{k+1}-\bar z\|^2 \, \implies \, \Bigl(1+\frac{\omega^2}{4}\Bigr)\|z_{k+1}-\bar z\|^2 \le \|z_k-\bar z\|^2,
\]
ensuring $(z_k)$ converges Q-linearly to $\bar z$ with constant at most $(1+\omega^2/4)^{-1/2}$.
\end{proof}
\noindent
We highlight that Q-linear convergence for cCRM iterates is not present in \cite{BehlingBelloCruzIusemSantos2024}.

\begin{remark}[Improved rates via modularity]\label{rem:deep-kernel}
We benchmark against the standard cCRM rate from \cite[Theorem~3.10]{BehlingBelloCruzIusemSantos2024}, bounded by $\beta_{\mathrm{cCRM}} := \tfrac{1}{2}\beta^2(1+\beta)$. The modularity of ecCRM allows us to strictly improve this constant in two ways. First, with the standard kernel $T=P_YP_X$, notice that $Tz = P_Y(P_X z)$, implying $\mathfrak d(Tz) \le \beta \mathfrak d(z)$ by Lemma~\ref{lem:pcrm-rate}. Combining this with vanishing steps $\alpha_k \to 0$ in Lemma~\ref{lem:rates} yields an asymptotic constant of $\beta^3 < \beta_{\mathrm{cCRM}}$. Second, the deep kernel $T_{\mathrm{deep}} = P_Y P_X P_Y$ involves two alternations, yielding $\mathfrak d(T_{\mathrm{deep}}z) \le \beta^2 \mathfrak d(z)$. Substituting this into Lemma~\ref{lem:rates} with fixed $\alpha_k \equiv 1/2$ yields a rate of $\beta \cdot \beta_{\mathrm{cCRM}} < \beta_{\mathrm{cCRM}}$. Thus, both dynamic centralization and deeper operators enforce strictly stronger contraction than the classical method.
\end{remark}

We now investigate conditions under which ecCRM exhibits superlinear convergence
near a smooth intersection point of \(X\) and \(Y\). 
\begin{assumption}\label{ass:super-smooth}
In addition to Assumption \ref{ass:main},
    \begin{enumerate}[label=(\roman*)]
        \item $\mathrm{int}(S)$ is nonempty;
        \item in a neighborhood of a point \(\bar z\in \partial X\cap\partial Y\) (taken as the limit point of $(z_k)$),
the boundaries \(\partial X\) and \(\partial Y\) are \(C^1\) manifolds of
dimension \(n-1\).
    \end{enumerate}
\end{assumption}
This is exactly the geometric setting used in
\cite[Section~3.3]{BehlingBelloCruzIusemSantos2024} for the superlinear analysis of cCRM. Condition
\(\mathrm{int}(S)\neq\varnothing\) also implies the local error bound~\eqref{eq:EB} around \(\bar z\).

We will use the following tangency estimate from \cite[Lemma~2.10]{Barros3pm}.

\begin{lemma}[Tangency limit to manifold]\label{lemmanif}
Let $M \subset \mathbb{R}^n$ be a differentiable manifold of dimension $n-1$
and let $\bar z \in M$. If $(q_k) \subset M$ and $(z_k) \subset \mathbb{R}^n$
satisfy
\[
q_k \to \bar z, \qquad z_k \to \bar z, \qquad z_k \in T_M(q_k), \qquad z_k \neq q_k,
\]
where $T_M(q_k)$ denotes the affine tangent hyperplane to $M$ at $q_k$, then
\[
\lim_{k \to \infty} 
\frac{\operatorname{dist}(z_k,M)}{\|z_k - q_k\|} = 0.
\]
\end{lemma}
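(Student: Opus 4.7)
The plan is to reduce to a local graph representation of $M$ near $\bar z$. Since $M$ is a $C^1$ manifold of dimension $n-1$, after an isometric change of coordinates I may assume $\bar z = 0$ and $T_M(\bar z) = \mathbb{R}^{n-1}\times\{0\}$. Then a neighborhood of $0$ in $M$ is the graph of a $C^1$ function $\phi : U \subset \mathbb{R}^{n-1} \to \mathbb{R}$ with $\phi(0)=0$ and $\nabla\phi(0)=0$. For $k$ large, both $q_k$ and $z_k$ lie in this chart, so I write $q_k = (a_k, \phi(a_k))$ with $a_k \to 0$. The affine tangent hyperplane at $q_k$ becomes
\[
T_M(q_k) = \bigl\{(a_k + v,\ \phi(a_k) + \langle \nabla\phi(a_k), v\rangle)\ :\ v \in \mathbb{R}^{n-1}\bigr\}.
\]

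Using $z_k \in T_M(q_k)$, I would write $z_k = (a_k + v_k,\ \phi(a_k) + \langle \nabla\phi(a_k), v_k\rangle)$ for a unique $v_k \in \mathbb{R}^{n-1}$. The convergence $z_k - q_k \to 0$ forces $v_k \to 0$, and directly from the product structure,
\[
\|z_k - q_k\|^2 = \|v_k\|^2 + |\langle \nabla\phi(a_k), v_k\rangle|^2 \ \ge\ \|v_k\|^2.
\]
Since $v_k \ne 0$ (from $z_k \ne q_k$), the ratio of interest is well defined.

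Next, since $(a_k + v_k, \phi(a_k + v_k)) \in M$ for large $k$, I would estimate from above
\[
\operatorname{dist}(z_k, M)
\ \le\
\bigl|\phi(a_k + v_k) - \phi(a_k) - \langle \nabla\phi(a_k), v_k\rangle\bigr|.
\]
The $C^1$ integral form of the mean value theorem gives $\phi(a_k+v_k)-\phi(a_k) = \int_0^1 \langle \nabla\phi(a_k + t v_k), v_k\rangle\, dt$, so the right-hand side is bounded by $\|v_k\|\cdot \epsilon_k$ with $\epsilon_k := \sup_{t\in[0,1]}\|\nabla\phi(a_k + t v_k) - \nabla\phi(a_k)\|$. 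Continuity of $\nabla\phi$ at $0$, together with $a_k, v_k \to 0$, yields $\epsilon_k \to 0$. Combining with the lower bound $\|z_k - q_k\| \ge \|v_k\|$ from the previous paragraph,
\[
\frac{\operatorname{dist}(z_k, M)}{\|z_k - q_k\|}\ \le\ \epsilon_k \ \longrightarrow\ 0.
\]

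The main obstacle I anticipate is purely technical: justifying that the sequence eventually lives in a single graph chart, so that the nearest-point candidate $(a_k+v_k, \phi(a_k+v_k))$ is an admissible comparison point in $M$. This uses only $z_k, q_k \to \bar z$ together with the fact that the chart is open. Everything else is a first-order Taylor estimate, which is exactly where the $C^1$ hypothesis enters — a weaker, merely topological manifold assumption would not suffice.
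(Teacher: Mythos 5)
Your proof is correct. Note that the paper does not prove this lemma at all --- it imports it verbatim as \cite[Lemma~2.10]{Barros3pm} --- so there is no in-paper argument to compare against; what you have written is a valid, self-contained proof. The graph-chart reduction is the standard route: writing $M$ locally as the graph of a $C^1$ function $\phi$ over the tangent hyperplane at $\bar z$, parametrizing $z_k\in T_M(q_k)$ by $v_k$, bounding $\operatorname{dist}(z_k,M)$ from above by the vertical gap to the comparison point $(a_k+v_k,\phi(a_k+v_k))$, and controlling that gap by $\|v_k\|\,\epsilon_k$ with $\epsilon_k\to 0$ via the integral mean value theorem and continuity of $\nabla\phi$ at $0$. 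The two technical points you flag are exactly the right ones and are both easily discharged: $q_k\to\bar z$ with $q_k\in M$ forces $q_k$ into the chart eventually, and $a_k,v_k\to 0$ puts $a_k+v_k$ in the chart domain $U$ so the comparison point is genuinely in $M$ (you only need it to be \emph{some} point of $M$, not the nearest one, since you bound a distance from above). One cosmetic remark: $z_k$ itself need not lie in $M$, so saying it ``lies in this chart'' should be read as saying its first $n-1$ coordinates lie in $U$; this does not affect the argument.
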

This implies the superlinear version of Lemma \ref{lem:pcrm-rate}:

\begin{lemma}[Superlinear advance by PCRM]
\label{lem:PCRM-superlinear-ecCRM}
Let Assumption \ref{ass:super-smooth} hold, and let
$(w_k)$ be a sequence of strictly centralized points converging to $\bar z \in S$.
Then
\[
\lim_{k\to\infty} \frac{\mathfrak d(\mathrm{PCRM}(w_k))}{\mathfrak d(w_k)} = 0.
\]
\end{lemma}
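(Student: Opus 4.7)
The plan is to show that $c_k:=\mathrm{PCRM}(w_k)$ lies on the affine tangent hyperplane to $\partial X$ at $P_Xw_k$ (and symmetrically for $\partial Y$ at $P_Yw_k$), so that Lemma~\ref{lemmanif} forces $\mathrm{dist}(c_k,X)$ and $\mathrm{dist}(c_k,Y)$ to vanish faster than $\|c_k-P_Xw_k\|$ and $\|c_k-P_Yw_k\|$, respectively. The local error bound following from $\mathrm{int}(S)\neq\varnothing$ then promotes these residuals to superlinear decay of $\mathfrak d(c_k)$.

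The key identification is as follows. Because $w_k$ is strictly centralized, $w_k\notin X\cup Y$, so $P_Xw_k\in\partial X$ and $P_Yw_k\in\partial Y$. The circumcenter identity $\|c_k-w_k\|=\|c_k-R_Xw_k\|$ places $c_k$ on the perpendicular bisector of $[w_k,R_Xw_k]$, namely the hyperplane through $P_Xw_k$ with normal $w_k-P_Xw_k$. Under the $C^1$ hypothesis this hyperplane is precisely the affine tangent plane $T_{\partial X}(P_Xw_k)$, since $w_k-P_Xw_k$ points along the outward unit normal at that boundary point. The symmetric statement holds for $\partial Y$.

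For the control estimates, Lemma~\ref{lem:decreasedelta} applied with $s=\bar z$ gives $\|c_k-\bar z\|\le\|w_k-\bar z\|\to 0$, so $c_k\to\bar z$; nonexpansiveness of $P_X,P_Y$ then yields $P_Xw_k,P_Yw_k\to\bar z$, placing them in the smooth region of $\partial X,\partial Y$ for large $k$. Lemma~\ref{lem:decreasedelta} with $s=P_S(w_k)$, combined with a short triangle inequality, shows $\|c_k-P_Xw_k\|\le 2\mathfrak d(w_k)$ and similarly for $Y$. Invoking Lemma~\ref{lemmanif} with $M=\partial X$, $q_k=P_Xw_k$, $z_k=c_k$ (handling the degenerate indices $c_k=P_Xw_k$ separately, for which $\mathrm{dist}(c_k,X)=0$ trivially) yields $\mathrm{dist}(c_k,\partial X)=o(\|c_k-P_Xw_k\|)=o(\mathfrak d(w_k))$, hence $\mathrm{dist}(c_k,X)=o(\mathfrak d(w_k))$; the analogous statement holds for $Y$. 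Since $\mathrm{int}(S)\neq\varnothing$ implies the local error bound~\eqref{eq:EB} at $\bar z$, and $c_k$ lies in the corresponding neighborhood eventually, $\mathfrak d(c_k)\le\omega^{-1}\delta(c_k)=o(\mathfrak d(w_k))$, as claimed.

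The main conceptual obstacle is the tangent-hyperplane identification in the second paragraph: it is the single bridge that lets the $C^1$ hypothesis enter through Lemma~\ref{lemmanif}. Everything afterward is bookkeeping, the only delicate points being the harmless degeneracy $c_k=P_Xw_k$ and the need to dominate the denominator $\|c_k-P_Xw_k\|$ uniformly by $\mathfrak d(w_k)$, which the Fejér-type estimate from Lemma~\ref{lem:decreasedelta} provides.
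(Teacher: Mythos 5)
Your proposal is correct and follows essentially the same route as the paper's proof: place $c_k=\mathrm{PCRM}(w_k)$ on the supporting hyperplanes through $P_Xw_k$ and $P_Yw_k$, identify these with the tangent hyperplanes via the $C^1$ hypothesis, invoke Lemma~\ref{lemmanif} after bounding $\|c_k-P_Xw_k\|$ by a constant multiple of $\mathfrak d(w_k)$, and finish with the error bound implied by $\mathrm{int}(S)\neq\varnothing$. The only (harmless) deviations are that you obtain the hyperplane membership directly from the perpendicular-bisector characterization of the circumcenter rather than from Lemma~\ref{lem:2.3ccrm}, and your triangle-inequality bound $\|c_k-P_Xw_k\|\le 2\,\mathfrak d(w_k)$ replaces the paper's Pythagorean bound $\|c_k-P_Xw_k\|\le \mathfrak d(w_k)$; both suffice, and your explicit treatment of the degenerate case $c_k=P_Xw_k$ is a nice touch.
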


\begin{proof}
    Set $c_k := \mathrm{PCRM}(w_k)$ and $\mathfrak d_k := \mathfrak d(w_k) = \mathrm{dist}(w_k,S)$.
By strict centralization and Lemma~\ref{lem:2.3ccrm}, we can write \(c_k = P_{H_X^k \cap H_Y^k}(w_k),\) where
\begin{align*}
    H_X^k &:= \bigl\{z\in\RR^n \;\big|\; \langle z - P_X w_k,\; w_k - P_X w_k\rangle = 0\bigr\},
\\H_Y^k &:= \bigl\{z\in\RR^n \;\big|\; \langle z - P_Y w_k,\; w_k - P_Y w_k\rangle = 0\bigr\}
\end{align*}
are supporting hyperplanes through $P_X w_k$ and $P_Y w_k$, respectively. Since $w_k\to\bar z$, we have \(P_X w_k \to \bar z, \ P_Y w_k \to \bar z.\)
By local $C^1$ smoothness of $\partial X$ and $\partial Y$, for all large $k$ the hyperplanes
$H_X^k$ and $H_Y^k$ coincide with the tangent spaces
\[
H_X^k = T_{\partial X}(P_X w_k),
\qquad
H_Y^k = T_{\partial Y}(P_Y w_k).
\]
Now apply Lemma \ref{lemmanif} to the manifolds
$\partial X$ and $\partial Y$. With
\[
q_k^X := P_X w_k \in \partial X, \quad z_k := c_k \in H_X^k = T_{\partial X}(q_k^X),
\]
and $q_k^X,z_k\to\bar z$, that lemma yields
\[
\lim_{k\to\infty}
\frac{\mathrm{dist}(c_k,X)}{\|c_k - P_X w_k\|} = 0.
\]
Since $H_X^k$ and $H_Y^k$ are boundaries of supporting half-spaces containing $S$,
\[
\mathfrak d_k
=
\mathrm{dist}(w_k,S)
\;\ge\;
\mathrm{dist}(w_k, H_X^k\cap H_Y^k)
=
\|w_k - c_k\|.
\]
Since $c_k \in H_X^k$ and $P_X w_k = P_{H_X^k}w_k,$
\[
\|w_k - c_k\|^2
=
\|w_k - P_X w_k\|^2 + \|c_k - P_X w_k\|^2,
\]
which implies \(\|c_k - P_X w_k\|
\le
\|w_k - c_k\|
\le
\mathfrak d_k.\) Combining these facts,
\[
\frac{\mathrm{dist}(c_k,X)}{\mathfrak d_k}
=
\frac{\mathrm{dist}(c_k,X)}{\|c_k - P_X w_k\|}
\cdot
\frac{\|c_k - P_X w_k\|}{\mathfrak d_k}
\;\longrightarrow\;
0,
\]
The same reasoning gives \(\mathrm{dist}(c_k,Y)/\mathfrak d_k \to 0.\)
Finally, since \eqref{eq:EB} holds in a neighborhood of $\bar z$, we have
for all large $k$:
\[
\omega\,\mathrm{dist}(c_k,S)
\;\le\;
\max\{\mathrm{dist}(c_k,X),\mathrm{dist}(c_k,Y)\}.
\]
Dividing by $\mathfrak d_k$ and letting $k\to\infty$ yields
\begin{align*}
    \limsup_{k\to\infty}
\frac{\mathfrak d(\mathrm{PCRM}(w_k))}{\mathfrak d(w_k)}
&=
\limsup_{k\to\infty}
\frac{\mathrm{dist}(c_k,S)}{\mathfrak d_k}
\\\;&\le\;
\frac{1}{\omega}
\limsup_{k\to\infty}
\max\!\left\{
\frac{\mathrm{dist}(c_k,X)}{\mathfrak d_k},
\frac{\mathrm{dist}(c_k,Y)}{\mathfrak d_k}
\right\} = 0,
\end{align*}
which is the desired superlinear advance.
\end{proof}

Combining Lemma~\ref{lem:PCRM-superlinear-ecCRM} with Lemma~\ref{lem:centralized-interp}~(iii) leads to the
following superlinear result for ecCRM with “good” kernels $T$.

\begin{theorem}[Superlinear by $\mathrm{Im}\,T\subset P_Y(X)$]
\label{thm:ecCRM-superlinear-T}
Let $T$ be an admissible
operator such that $\mathrm{Im}\,T\subset P_Y(X)$, and let $(\alpha_k)\subset(0,1)$
be arbitrary. Consider the ecCRM iterates \((z_k)\) starting from some $z_0\in\RR^n$, and let $z_k\to\bar z$ by Theorem \ref{thm:ecCRM-conv}. Let Assumption \ref{ass:super-smooth} hold. Then the distance sequence
\(\bigl(\mathfrak d(z_k)\bigr)_{k\ge 0}\) converges to $0$ superlinearly, and the iterates $(z_k)$ themselves converge to $\bar z$ superlinearly.
\end{theorem}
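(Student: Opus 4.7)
The plan is to use the structural hypothesis $\mathrm{Im}\,T \subset P_Y(X)$ to ensure that $w_k := N^{\alpha_k} z_k$ is \emph{strictly} centralized at every step, then apply Lemma \ref{lem:PCRM-superlinear-ecCRM} to the sequence $(w_k)$ to obtain $\mathfrak d(z_{k+1})/\mathfrak d(w_k) \to 0$, show $\mathfrak d(w_k) \le \mathfrak d(z_k)$ so that the same ratio bounds $\mathfrak d(z_{k+1})/\mathfrak d(z_k)$, and finally invoke Proposition \ref{prop:fejer}(iii) to transfer Q-superlinearity of the distance sequence into R-superlinearity of the iterates.

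First I would dispose of the trivial case $\mathfrak d(z_{k_0}) = 0$: by Fejér monotonicity, $z_k = z_{k_0}\in S$ thereafter and the claim is vacuous. So assume $\mathfrak d(z_k)>0$ for all $k$. This forces $Tz_k \notin S$: otherwise $Tz_k\in X$ would give $P_X Tz_k = Tz_k$, hence $w_k = Tz_k \in S$, and since $\mathrm{PCRM}$ fixes points of $S$ we would obtain $\mathfrak d(z_{k+1}) = 0$, a contradiction. Combined with $\mathrm{Im}\,T \subset P_Y(X)$, this places $Tz_k$ in $P_Y(X)\setminus S$, so Lemma \ref{lem:centralized-interp}(iii), applied with $y = Tz_k$ and $\alpha = \alpha_k$, yields that $w_k$ is strictly centralized.

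To apply Lemma \ref{lem:PCRM-superlinear-ecCRM} I also need $w_k \to \bar z$. Theorem \ref{thm:ecCRM-conv} gives $z_k \to \bar z \in S$, so quasi-nonexpansivity of $T$ with $s = \bar z$ gives $Tz_k \to \bar z$; continuity of $P_X$ together with $\bar z \in X$ yields $P_X Tz_k \to \bar z$; and $w_k$, being a convex combination, inherits the limit. The lemma then delivers $\mathfrak d(z_{k+1})/\mathfrak d(w_k) \to 0$. The comparison $\mathfrak d(w_k) \le \mathfrak d(z_k)$ is routine: for $s = P_S z_k$, quasi-nonexpansivity of $T$ and nonexpansivity of $P_X$ give $\|Tz_k - s\|, \|P_X Tz_k - s\| \le \|z_k - s\| = \mathfrak d(z_k)$, so by convexity $\|w_k - s\| \le \mathfrak d(z_k)$ as well.

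Combining yields $\mathfrak d(z_{k+1})/\mathfrak d(z_k) \to 0$, i.e., Q-superlinear convergence of $(\mathfrak d(z_k))$ to $0$. Fejér monotonicity of $(z_k)$ from Theorem \ref{thm:ecCRM-conv} together with Proposition \ref{prop:fejer}(iii) then upgrades this to R-superlinear convergence of $(z_k)$ to $\bar z$. The only delicate point is securing strict centralization at every step, which is precisely the role of the hypothesis $\mathrm{Im}\,T \subset P_Y(X)$; without it, Lemma \ref{lem:centralized-interp}(ii) would still permit $w_k \in Y$, and the superlinear PCRM step of Lemma \ref{lem:PCRM-superlinear-ecCRM} could not be engaged.
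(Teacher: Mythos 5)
Your proposal is correct and follows essentially the same route as the paper: use $\mathrm{Im}\,T\subset P_Y(X)$ with Lemma~\ref{lem:centralized-interp}(iii) to get strict centralization of $w_k=N^{\alpha_k}z_k$ (after disposing of the finite-termination case $Tz_k\in S$), apply Lemma~\ref{lem:PCRM-superlinear-ecCRM} together with $\mathfrak d(w_k)\le\mathfrak d(z_k)$, and conclude via Proposition~\ref{prop:fejer}(iii). Your version is in fact slightly more careful than the paper's in two spots — explicitly verifying $w_k\to\bar z$ before invoking Lemma~\ref{lem:PCRM-superlinear-ecCRM}, and organizing the degenerate case as a once-and-for-all dichotomy rather than a per-iteration one — but these are refinements, not a different argument.
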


\begin{proof}
For each $k$, set
\(w_k := N^{\alpha_k} z_k.\) By Lemma~\ref{lem:centralized-interp}~(iii) applied with $y = Tz_k$, for every $k$ we have either $w_k$ strictly centralized, or $Tz_k \in S.$ The latter yields $\mathscr{C}^{\alpha_k} z_k \in S$, hence $(z_k)$ becomes stationary and the conclusion is obvious. Now we assume that $w_k$ is strictly centralized.

By \eqref{eq:sNa-sz-iter} we have \(\mathfrak d(w_k) \le \mathfrak d(z_k).\) By Lemma~\ref{lem:PCRM-superlinear-ecCRM}, we obtain
\[
\lim_{k\to\infty}
\frac{\mathfrak d(z_{k+1})}{\mathfrak d(z_k)}
\le
\lim_{k\to\infty}
\frac{\mathfrak d(z_{k+1})}{\mathfrak d(w_k)}
=
\lim_{k\to\infty}
\frac{\mathfrak d(\mathrm{PCRM}(w_k))}{\mathfrak d(w_k)}
= 0.
\]
Finally, by Proposition~\ref{prop:fejer}\,(iii), $(z_k)$ converges superlinearly to $\bar z\in S$.
\end{proof}

When $N^\alpha z$ is not strictly centralized, the advance is guaranteed by the “linear” estimate in Lemma~\ref{lem:rates}. This is used in the next lemma.

\begin{theorem}[Superlinear by vanishing step sizes]
\label{thm:ecCRM-superlinear-alpha}
Let $T$ be any admissible
operator, and let $(\alpha_k)\subset(0,1)$
satisfy \(\alpha_k \to 0.\) Consider the ecCRM iterates \((z_k)_{k \ge 0}\) starting from some $z_0\in\RR^n$, and let $z_k\to\bar z$ by Theorem \ref{thm:ecCRM-conv}. Let Assumption \ref{ass:super-smooth} hold with this $\bar z.$ Then the distance sequence \(\bigl(\mathfrak d(z_k)\bigr)_{k\ge 0}\) converges to $0$ superlinearly, and the iterates $(z_k)$ themselves converge to $\bar z$ superlinearly.
\end{theorem}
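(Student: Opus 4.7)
The plan is to reuse the two superlinear advance lemmas (\ref{lem:rates}(ii) and \ref{lem:PCRM-superlinear-ecCRM}) via a simple case split and conclude with Proposition~\ref{prop:fejer}(iii). First, Theorem~\ref{thm:ecCRM-conv} gives $z_k \to \bar z \in S$. Set $w_k := N^{\alpha_k} z_k$, so $z_{k+1} = \mathrm{PCRM}(w_k)$ by definition of $\mathscr{C}^{\alpha_k}$. Using admissibility of $T$ at $s=\bar z$ and nonexpansiveness of $P_X$ (noting $\bar z \in X$), I would first verify
\[
\|w_k - \bar z\| \;\le\; \alpha_k\|Tz_k - \bar z\| + (1-\alpha_k)\|P_X T z_k - \bar z\| \;\le\; \|z_k - \bar z\| \;\to\; 0,
\]
so $w_k \to \bar z$ as well. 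Assume WLOG $\mathfrak d(z_k) > 0$ for all $k$, otherwise the iterates become stationary in $S$ and there is nothing to prove. The goal then reduces to showing $\mathfrak d(z_{k+1})/\mathfrak d(z_k) \to 0$.

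\medskip
Partition $\mathbb N = A \sqcup B$, where $A := \{k : w_k \text{ is strictly centralized}\}$ and $B$ is its complement. For $k \in B$, Lemma~\ref{lem:rates}(ii) together with the admissibility bound $\mathfrak d(Tz_k) \le \mathfrak d(z_k)$ yields
\[
\frac{\mathfrak d(z_{k+1})}{\mathfrak d(z_k)} \;\le\; \frac{\alpha_k \beta}{1-(1-\alpha_k)\beta},
\]
whose right-hand side tends to $0$ by the hypothesis $\alpha_k \to 0$. For $k \in A$, the subsequence $(w_k)_{k \in A}$ consists of strictly centralized points converging to $\bar z$, so Lemma~\ref{lem:PCRM-superlinear-ecCRM}, which is stated for any such sequence, gives $\mathfrak d(\mathrm{PCRM}(w_k))/\mathfrak d(w_k) \to 0$ along $A$. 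Applying \eqref{eq:sNa-sz-iter} with $s = P_S(z_k)$ produces $\mathfrak d(w_k) \le \mathfrak d(z_k)$, whence
\[
\frac{\mathfrak d(z_{k+1})}{\mathfrak d(z_k)} \;\le\; \frac{\mathfrak d(z_{k+1})}{\mathfrak d(w_k)} \;\longrightarrow\; 0 \quad \text{along } A.
\]

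\medskip
Combining both regimes gives $\mathfrak d(z_{k+1})/\mathfrak d(z_k) \to 0$ on the full sequence, and Proposition~\ref{prop:fejer}(iii) then transfers this superlinearity to $(z_k)$ itself, completing the argument. The main obstacle is precisely the coexistence of the two regimes: on $B$ only the ``linear'' bound of Lemma~\ref{lem:rates}(ii) is available, and the driver of superlinearity is $\alpha_k \to 0$; on $A$, the smoothness of $\partial X, \partial Y$ does the work via Lemma~\ref{lem:PCRM-superlinear-ecCRM}. Splitting the indices into $A$ and $B$ is the cleanest way to merge the two mechanisms without imposing monotonicity on $(\alpha_k)$ or any geometric classification of the iterates themselves.
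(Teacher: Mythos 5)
Your proposal is correct and follows essentially the same route as the paper's proof: the same case split on whether $w_k = N^{\alpha_k}z_k$ is strictly centralized, with Lemma~\ref{lem:PCRM-superlinear-ecCRM} handling the strictly centralized indices and Lemma~\ref{lem:rates}(ii) together with $\alpha_k \to 0$ handling the rest, concluding via Proposition~\ref{prop:fejer}(iii). The only cosmetic difference is that you make the convergence $w_k \to \bar z$ and the subsequence partition explicit, which the paper leaves implicit.
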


\begin{proof}
Assume $z_k \notin S$ for all $k$, otherwise the claim is obvious. Let $w_k := N^{\alpha_k}z_k$. By \eqref{eq:sNa-sz-iter}, we have $\mathfrak d(w_k) \le \mathfrak d(z_k)$.
We examine the ratio $\mathfrak d(z_{k+1})/\mathfrak d(z_k)$ in two cases.
If $w_k$ is strictly centralized, Lemma~\ref{lem:PCRM-superlinear-ecCRM} applies (since $w_k \to \bar z$), yielding
\[
\frac{\mathfrak d(z_{k+1})}{\mathfrak d(z_k)}
\;\le\;
\frac{\mathfrak d(\mathrm{PCRM}(w_k))}{\mathfrak d(w_k)}
\;\xrightarrow{k\to\infty}\; 0.
\]
If $w_k$ is not strictly centralized, Lemma~\ref{lem:rates}\,(ii) and the admissibility of $T$ imply
\[
\frac{\mathfrak d(z_{k+1})}{\mathfrak d(z_k)}
\;\le\;
\frac{\alpha_k\beta}{1-(1-\alpha_k)\beta}
\;\xrightarrow{k\to\infty}\; 0,
\]
since $\alpha_k \to 0$.
In either case, $\limsup_{k\to\infty} \mathfrak d(z_{k+1})/\mathfrak d(z_k) = 0$. By Proposition~\ref{prop:fejer}\,(iii), the sequence $(z_k)$ converges superlinearly to $\bar z$.
\qed
\end{proof}

\section{Numerical Experiments}
\label{sec:experiments}

We evaluate ecCRM on two benchmarks: rank-deficient matrix completion and high-dimensional ellipsoids. In all tests, algorithms were terminated when the feasibility gap satisfied $\delta(z_k) \le \varepsilon$. Experiments were conducted in Python on a standard workstation; code is available at \url{https://github.com/pabl0ck/cfp-bap-lab}. 

\paragraph{Benefit of Deep Kernels: Matrix Completion.} 
We first consider the positive semidefinite (PSD) matrix completion problem:
\begin{equation}
    \text{find } Z \in \mathcal{S}_+^n \cap \{M \in \mathbb{R}^{n \times n} \mid M_{ij} = A_{ij} \text{ for } (i,j) \in \Omega\}.
\end{equation}
We generated 10 random instances ($n=100$, rank $5$, 40\% observed entries). This geometry lacks the property $\text{int} (S) \neq \varnothing$ required for superlinear rates. We compared the standard cCRM kernel ($T = P_Y P_X$, 4 projections) against the modular ``Deep'' kernel ($T = P_Y P_X P_Y$, 5 projections) with a tolerance of $10^{-2}$. Notably, the Deep kernel outperformed the standard variant across all three tested step sizes. Despite the higher per-iteration cost, it reduced total runtime by approximately \textbf{9.1\%} and iteration count by \textbf{9.4\%} at the optimal step size $\alpha=0.5$.

\begin{table}[h]
\centering
\caption{Runtime sensitivity to step size $\alpha$. $\alpha=0.5$ minimizes time for both methods.}
\label{tab:alpha}
\setlength{\tabcolsep}{5pt}
\begin{small}
\begin{tabular}{l|ccc|ccc}
\toprule
 & \multicolumn{3}{c|}{\textbf{Time (s)}} & \multicolumn{3}{c}{\textbf{Iterations}} \\
\textbf{Kernel} & $\alpha=0.25$ & $\mathbf{\alpha=0.5}$ & $\alpha=0.75$ & $\alpha=0.25$ & $\mathbf{\alpha=0.5}$ & $\alpha=0.75$ \\
\midrule
cCRM      & 29.24 & 21.01 & 31.59 & 4363 & 3128 & 4717 \\
Deep      & \textbf{26.89} & \textbf{19.25} & \textbf{29.39} & \textbf{3962} & \textbf{2834} & \textbf{4365} \\
\bottomrule
\end{tabular}
\end{small}
\end{table}

\paragraph{Acceleration via Vanishing Step Size: Intersection of Ellipsoids.} 
To validate the impact of the relaxation parameter, we consider the intersection of two nearly tangent, anisotropic ellipsoids in $\mathbb{R}^{2000}$ (condition number $\approx 20$). Since the intersection has nonempty interior, both methods are expected to converge linearly. Using $T=P_YP_X$ and a tighter tolerance of $10^{-12}$, we compare the fixed step $\alpha=0.5$ (cCRM) against ecCRM with the vanishing schedule $\alpha_k = (k+2)^{-1}$.

\begin{figure}[h]
    \centering
    \includegraphics[width=0.7\textwidth]{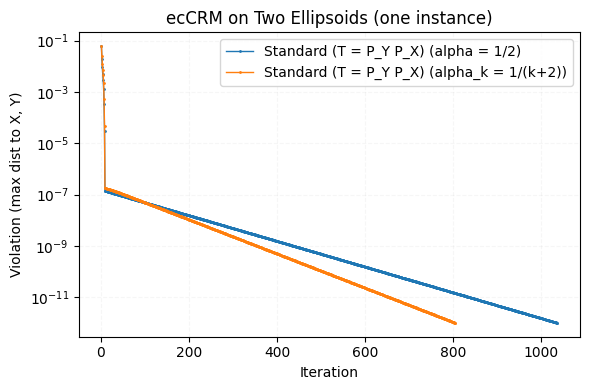}
    \caption{Convergence on $\mathbb{R}^{2000}$ Ellipsoids.}
    \label{fig:ellipsoid_conv}
\end{figure}

As illustrated in Fig.~\ref{fig:ellipsoid_conv}, the dynamic strategy achieves a noticeably steeper descent slope. This geometric advantage translates to a \textbf{15.1\%} reduction in iterations and a \textbf{20.3\%} reduction in total runtime (Table \ref{tab:ellipsoids}), confirming that dynamic centralization significantly improves the asymptotic rate constant.

\begin{table}[h]
\centering
\caption{Comparison on $\mathbb{R}^{2000}$ Ellipsoids (Avg.\ 10 runs).}
\label{tab:ellipsoids}
\setlength{\tabcolsep}{6pt}
\begin{small}
\begin{tabular}{lccc}
\toprule
\textbf{Method} & \textbf{Iters} & \textbf{Time (s)} & \textbf{Final Viol} \\
\midrule
cCRM ($\alpha=0.5$) & 1020 & 13.64 & $9.9 \times 10^{-13}$ \\
\textbf{ecCRM ($\mathbf{\alpha_k \to 0}$)} & \textbf{866} & \textbf{10.87} & $\mathbf{9.9 \times 10^{-13}}$ \\
\bottomrule
\end{tabular}
\end{small}
\end{table}

\end{document}